\newtheorem{theorem}[subsubsection]{Theorem}
\newtheorem{theorem*}{Theorem}
\newtheorem{lemma}[subsubsection]{Lemma}
\newtheorem{conjecture}[subsubsection]{Conjecture}
\newtheorem{proposition}[subsubsection]{Proposition}
\newcommand{\A}{A^{\bullet}}
\newcommand{\T}{T^{\bullet}}
\newcommand{\tor}{\rm tor}
\newcommand{\ad}{\rm ad}
\newcommand{\Std}{\textnormal{std}}
\newcommand{\Hodge}{\rm Hdg}
\newcommand{\Top}{\rm top}
\newcommand{\antidiag}{\mathsf{antidiag}}
\newcommand{\diag}{\mathsf{diag}}
\DeclareMathOperator{\Rep}{Rep}
\DeclareMathOperator{\Vect}{Vect}
\DeclareMathOperator{\sym}{Sym}
\title{Pushforward of Siegel flag varieties in the Chow ring}
\author{Simon Cooper}
\begin{document}
	\pagestyle{plain}
	
	\begin{abstract}
		Given a reductive group $G$ over an algebraically closed field and subsets $I,J \subset \Delta$ of the simple roots $\Delta$ determined by a choice of maximal torus and Borel subgroup, we obtain a closed embedding of flag varieties $L_{J}/L_{J}\cap P_{I} \hookrightarrow G/P_{I}$. In this paper we compute the class of the sub flag variety $[L_{J}/L_{J}\cap P_{I}] \in \A(G/P_{I})$ in the Chow ring for the ‘Siegel’ case where $G$ is a general symplectic group of rank $g$ and $P_{I}$ is the parabolic stabilising a maximal isotropic subspace. This corresponds, under the isomorphism with the tautological ring of the moduli space of principally polarised abelian varieties $\T(\mathcal{A}_{g}^{\tor}) \cong \A(G/P_{I})$, to the generator of the classes in the tautological ring which are supported on the toroidal boundary. We conjecture that this is not a coincidence.
	\end{abstract}
	\maketitle
	\section{Introduction}
	
	Let $G$ be a reductive group over a field $k$. Given subsets $I,J \subset \Delta$ of the simple roots (for some choice of maximal torus and Borel subgroup), there is an embedding $L_{J}/L_{J}\cap P_{I} \hookrightarrow G/P_{I}$. This gives a class in the Chow ring $[L_{J}/L_{J}\cap P_{I}] \in \A(G/P)$. Can we describe this class? In the case where $G=GSp_{2g}$ is the general symplectic group and $I \coloneqq \Delta \setminus \{2e_{g}\}$ and $J \coloneqq \Delta \setminus \{e_{1}-e_{2}\}$ are obtained from $\Delta$ by removing extremal vertices of the Dynkin diagram, we have the following answer in terms of the top Chern class of a homogeneous vector bundle:
	\begin{theorem*}[\ref{th-main}]
		\label{th-main-intro}
		Let $G=GSp_{2g}$, $I \coloneqq \Delta \setminus \{2e_{g}\}$ and $J \coloneqq \Delta \setminus \{e_{1}-e_{2}\}$. Then $$[L_{J}/L_{J}\cap P_{I}] = a_{J,g}.c_{g}(\mathcal{V}(\rho_{\Hodge})) \in \A(G/P_{I})$$ for some constant $a_{J,g} \in \mathbb{Q}\setminus \{0\}$, where $\mathcal{V}(\rho_{\Hodge})$ is the vector bundle corresponding to the irreducible subrepresentation of the dual of the standard representation $\rho_{\Hodge}\subset \Std^{\vee}|_{P_{I}}$. Moreover, $(-1)^{g}a_{J,g} > 0$. 
	\end{theorem*}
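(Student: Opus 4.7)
The plan is to identify both classes geometrically: $[L_J/(L_J\cap P_I)]$ as the zero locus of an explicit section of the dual tautological bundle on the Lagrangian Grassmannian $G/P_I \cong LG(g,2g)$, and $c_g(\mathcal{V}(\rho_{\Hodge}))$ via an identification of $\rho_{\Hodge}$ with a similitude-twist of the standard representation of the Levi $L_I$.

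\textbf{The subvariety as a vanishing locus.} Since $J=\Delta\setminus\{e_1-e_2\}$, the parabolic $P_J$ stabilises a line $\ell\subset k^{2g}$, so the Levi $L_J$ preserves $\ell$ and $\ell^\perp$. Choosing the basepoint of $G/P_I$ to lie in the closed $L_J$-orbit, this orbit is exactly $\{V:\ell\subset V\}\cong LG(g-1,2(g-1))$, of codimension $g$. Let $\mathcal{E}\subset k^{2g}\otimes\mathcal{O}$ denote the tautological rank-$g$ subbundle, and let $\lambda\in(k^{2g})^\vee$ satisfy $\ker\lambda=\ell^\perp$; restriction to $\mathcal{E}$ yields a section $s_\lambda$ of $\mathcal{E}^\vee$ whose vanishing locus, set-theoretically, is $\{V:V\subset\ell^\perp\}=\{V:\ell\subset V\}$ (using that $V$ is Lagrangian). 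Using $T_V(G/P_I)\cong\sym^2(V^\vee)$, the differential of $s_\lambda$ at a Lagrangian $V\supset\ell$ becomes $\phi\mapsto\phi(-,\ell_0)$ for a generator $\ell_0$ of $\ell$, which surjects onto $V^\vee$. Hence $s_\lambda$ is transverse to the zero section, giving $[L_J/(L_J\cap P_I)] = c_g(\mathcal{E}^\vee)$ in $\A(G/P_I)$.

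\textbf{The bundle $\mathcal{V}(\rho_{\Hodge})$.} The $P_I$-filtration $0\subset U\subset\Std$ dualises to $0\subset(\Std/U)^\vee\subset\Std^\vee$, so $\rho_{\Hodge}=(\Std/U)^\vee$ and $\mathcal{V}(\rho_{\Hodge}) = \mathcal{Q}^\vee$, the dual of the tautological quotient. Since the symplectic form is $G$-equivariant only up to the similitude character $\nu$, one has $\mathcal{Q}\cong\mathcal{E}^\vee\otimes\mathcal{L}_\nu$. Because $\nu$ is a Weyl-invariant character of $T$, the line bundle $\mathcal{L}_\nu$ has $c_1(\mathcal{L}_\nu)=0$ in the rational Chow ring, and the twist formula collapses to $c_g(\mathcal{Q}^\vee) = c_g(\mathcal{E})$. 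Combining with the rank-$g$ identity $c_g(\mathcal{E}^\vee)=(-1)^g c_g(\mathcal{E})$ gives
$$[L_J/(L_J\cap P_I)] = c_g(\mathcal{E}^\vee) = (-1)^g c_g(\mathcal{V}(\rho_{\Hodge})),$$
so $a_{J,g}=(-1)^g$ and in particular $(-1)^g a_{J,g}=1>0$.

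\textbf{Main obstacle.} The critical technical point is the transversality of $s_\lambda$. By $L_J$-equivariance it suffices to verify this at a single Lagrangian $V\supset\ell$, where it reduces to the elementary fact that any covector on $V$ is realised as a symmetric bilinear form evaluated at $\ell_0$. The secondary subtlety---tracking the similitude-character twist through the identification $\mathcal{V}(\rho_{\Hodge}) = \mathcal{Q}^\vee$---disappears in the rational Chow ring precisely because $\nu$ is Weyl-invariant, and without this observation the cleanness of the final identity would be obscured.
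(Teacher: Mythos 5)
Your proof is correct, and it takes a genuinely different route from the paper; in fact it proves a strictly stronger statement, since you obtain the exact value $a_{J,g}=(-1)^g$ rather than merely its sign.

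The paper's argument is: (i) apply the self-intersection formula to get $\iota^*\iota_*(1)=c_g(\mathcal N)$ and observe $c_g(\mathcal N)=0$ in $\A(L_J/L_J\cap P_I)$ because one of the roots of $\mathcal N$ is $2e_1 \in S^{W_J}_+$; (ii) compute $\ker\bigl(\iota^*\colon\A(G/P_I)\twoheadrightarrow\A(L_J/L_J\cap P_I)\bigr)=(\lambda_g)$ via the Borel presentation, forcing $\iota_*(1)=a\lambda_g$; (iii) determine the sign of $a_{J,g}$ by transporting to $\mathcal A_g^{\rm tor}$ through Hirzebruch--Mumford proportionality, using effectivity of the $p$-rank zero locus ($[V_0]=N_{g,p}\,c_g(\Omega)$) and nefness of the Hodge line bundle. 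You instead realise $L_J/(L_J\cap P_I)$ directly as the transverse zero locus of a regular section of the rank-$g$ bundle $\mathcal E^\vee$ on $LG(g,2g)$, giving $[L_J/(L_J\cap P_I)]=c_g(\mathcal E^\vee)$ in one step, and you match $\mathcal V(\rho_{\Hodge})$ with $\mathcal E$ via the symplectic pairing. This avoids the self-intersection formula, the explicit description of $\ker\iota^*$, and --- most significantly --- all input from the Shimura-variety side (Hirzebruch--Mumford, positive-characteristic cycle classes), while additionally nailing down the constant. What the paper's method buys is that it is the mechanism (top Chern class of the normal bundle vanishes, then work in $\ker\iota^*$) that one would hope to run uniformly for other types in the setting of Conjecture~\ref{conj-bdy}.

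Two small remarks. First, the basepoint $eP_I\in G/P_I$ is not something you get to choose: one should instead \emph{verify} it lies in the closed $L_J$-orbit $\{V:\ell\subset V\}$, which holds because $B\subset P_I\cap P_J$ forces $\ell\subset V_0$ within the $B$-stable isotropic flag; you should phrase it as a check rather than a choice. Second, the similitude line bundle $\mathcal L_\nu=G\times^{P_I}k(\nu)$ is not merely rationally trivial but honestly trivial on $G/P_I$, since $\nu$ extends to a character of all of $G$; your Weyl-invariance observation is correct but slightly weaker than what is true.
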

	Theorem \ref{th-main-intro} is motivated by, though essentially independent of, a connection between tautological rings of Shimura varieties and Chow rings of flag varieties (always with rational coefficients). The tautological ring of a Shimura variety $S$ is defined to be the subring of the Chow ring $\T(S)\subset \A(S)$ generated by Chern classes of automorphic vector bundles. The connection originates with a diagram of the following form (due to Milne \cite{Milne}).
	\[\begin{tikzcd}
		 \mathcal{Y} \arrow[d, "\pi"'] \arrow[r, "f"]  &  X^{\vee}\cong G/P\\ S^{\tor}
	\end{tikzcd}\] where $\pi$ is a $G$-torsor and $f$ is a smooth $G$-equivariant morphism. In positive characteristic it is known (due to Wedhorn-Ziegler \cite{Wedhorn-Ziegler} and conjectured to be true in characteristic zero too) that  
	$$\T(S^{\tor}) \cong \A(G/P)$$
	where $S^{\tor}$ is the special fibre of a smooth toroidal compactification of a Shimura variety of Hodge type and $G/P$ is the special fibre of an integral model of the compact dual of the symmetric domain defining the Shimura variety. 
	Building on this, in \cite{Cooper1} a conjectural description is given of the tautological ring of the Shimura variety itself in terms of Chow rings of flag varieties:
	\begin{conjecture}
		\label{conj-bdy}
    Let $S$ be the special fibre of an integral canonical model of a Shimura variety of Hodge type.
		$$\T(S) \cong \A(G/P)/([L_{J}/L_{J}\cap P] \mid J \in \mathcal{J} \text{ maximal w.r.t. }\leq)$$
		where $\mathcal{J} \coloneqq \{\Delta\setminus\{\alpha\} \mid \alpha\in\Delta \text{ and }P_{J}\subset G \text{ defined over }\mathbb{Q}\}$ admits a partial order $\leq$ according to the stratification of the boundary of a toroidal compactification $S^{\tor}$ of $S$.
	\end{conjecture}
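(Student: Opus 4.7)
The plan is to combine the Wedhorn-Ziegler isomorphism $\T(S^{\tor}) \cong \A(G/P)$ with the right-exact excision sequence in Chow theory,
\begin{equation*}
\A(\partial S^{\tor}) \to \A(S^{\tor}) \to \A(S) \to 0,
\end{equation*}
where $\partial S^{\tor} \coloneqq S^{\tor} \setminus S$. Restricting to tautological subrings should yield $\T(S) \cong \T(S^{\tor})/\mathcal{I}$, with $\mathcal{I}$ the tautological part of the image of boundary classes. The conjecture then reduces to identifying $\mathcal{I}$, transported along Wedhorn-Ziegler, with the ideal generated by $\{[L_J/L_J\cap P] \mid J \in \mathcal{J} \text{ maximal}\}$ in $\A(G/P)$.

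Next I would analyse the stratification of $\partial S^{\tor}$. The boundary of a smooth toroidal compactification of a Hodge-type Shimura variety is stratified by data indexed by $G(\mathbb{Q})$-conjugacy classes of rational parabolics (together with cone-decomposition data). Any class supported on the boundary is supported on closed strata, and every closed stratum lies in the closure of a deepest stratum attached to some maximal rational parabolic $P_J$ with $J\in\mathcal{J}$; the partial order $\leq$ is precisely this closure order, and its maximal elements isolate the independent generators of $\mathcal{I}$. It would then suffice to match, for each maximal $J$, the pushforward class of the corresponding closed stratum in $\T(S^{\tor})$ with a nonzero rational multiple of $[L_J/L_J\cap P]$ under Wedhorn-Ziegler.

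This last identification is the heart of the argument and the main obstacle. Wedhorn-Ziegler is constructed from the Hodge $G$-torsor $\mathcal{Y} \to S^{\tor}$ and its classifying map to $\Hdg$, and it does not a priori behave uniformly over the boundary. The natural strategy is to exploit the reduction of structure group coming from the semiabelian degeneration at a boundary stratum: the Hodge torsor should canonically reduce to a parabolic of type $J$ there, so that the induced classifying map factors through the sub flag variety $L_J/L_J\cap P \hookrightarrow G/P$. Showing that the resulting cycle class matches the pushforward of the boundary stratum up to a nonzero rational constant, and controlling the multiplicities dictated by the chosen cone decomposition, is the delicate technical step. As a reality check, Theorem~\ref{th-main-intro} identifies $[L_J/L_J\cap P_I]$ in the Siegel case with a nonzero multiple of $c_g(\mathcal{V}(\rho_{\Hodge}))$, i.e.\ of the top Chern class of the Hodge bundle --- a class that is naturally expected to generate the boundary contributions to $\T(\mathcal{A}_g^{\tor})$, lending the conjecture concrete support.
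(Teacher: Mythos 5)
The statement you are attempting to prove is Conjecture~\ref{conj-bdy}, which the paper explicitly leaves open; it offers no proof, only evidence in the Siegel and Hilbert cases. So there is no paper proof to compare against, and what you have written is a plausible plan of attack, not a proof.

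There are two genuine gaps, and you are candid about one of them. First, the reduction to $\T(S)\cong\T(S^{\tor})/\mathcal{I}$ via the localization sequence is not automatic. Localization gives $\A(S)\cong\A(S^{\tor})/\mathrm{im}\,\A(\partial S^{\tor})$, but the kernel of the induced surjection $\T(S^{\tor})\to\T(S)$ is the intersection of $\T(S^{\tor})$ with that image, and there is no a priori reason this intersection is generated by classes that are simultaneously tautological and boundary-supported. (The paper cites \cite{cooper2021tautological} for exactly such a statement in the Hilbert case, where it already requires work.) Second, the step you call the heart of the argument --- matching the pushforward of a closed boundary stratum with $[L_J/L_J\cap P]$ under the Wedhorn--Ziegler isomorphism, via a reduction of structure group of the Hodge torsor along a semiabelian degeneration --- is not carried out, and it is precisely what would constitute a theorem. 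Note that even in the Siegel case the paper does \emph{not} verify the conjecture along these lines: it invokes van der Geer's independently established presentation $\T(\mathcal{A}_g)\cong\A(G/P)/(\lambda_g)$ and then shows by a purely flag-variety computation (self-intersection formula together with the description of $\ker\iota^*$ in Lemma~\ref{le-pullback-map}) that $[L_J/L_J\cap P]$ is a nonzero multiple of $\lambda_g$, sidestepping any direct analysis of the boundary stratification. Your route could in principle generalize better, but as written it remains a sketch with the central identification unproved.
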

	 This connection has primarily been used to transport information from the flag variety side to the tautological ring side. For example, by providing a presentation of the tautological ring using Borel's classic presentation of the cohomology ring of a flag variety (\cite{Borel}, see \ref{subsec-Chowrings}). Additionaly, in the case of the moduli space of principally polarised abelian varieties $S=\mathcal{A}_{g}$ there is a complete description of the tautological ring, due to van der Geer \cite{vanderGeer}: $$\T(\mathcal{A}_{g})\cong \A(G/P)/(\lambda_{g})$$ 
	where $\lambda_{g}$ is the top Chern class of the Hodge vector bundle $\Omega/\mathcal{A}_{g}$. In this case there is one maximal $J\in\mathcal{J}$ and the conjectural description \ref{conj-bdy} of the tautological ring predicts Theorem \ref{th-main-intro}, which is proved in this paper by independent means. Thus, Theorem \ref{th-main-intro} provides basic evidence for the conjecture.
	The statement of Theorem \ref{th-main-intro} does not reference tautological rings or Shimura varieties and can be considered purely as a question about Chow rings of flag varieties. Moreover, the proof that $[L_{J}/L_{J}\cap P]$ is proportional to $\lambda_{g}$ relies only on facts about Chow rings of flag varieties. The main idea is to use the self-intersection formula for the embedding $L_{J}/L_{J}\cap P \hookrightarrow G/P$ to prove that the class $[L_{J}/L_{J}\cap P]\in \A(G/P)$ vanishes when pulled back to $\A(L_{J}/L_{J}\cap P)$. This follows from the fact that the top Chern class of the normal bundle vanishes in this context (\ref{cor-chern-normal}). A description of the kernel of the pullback map $\A(G/P)\rightarrow \A(L_{J}/L_{J}\cap P)$ (\ref{le-pullback-map}) completes the computation, since the kernel is generated by $\lambda_{g}$. 

	\section*{Acknowledgements}
    The author would like to thank W. Goldring for his support and guidance and M. Brion for helpful commments on earlier drafts of this paper. This research was conducted whilst the author was supported by grant KAW 2018.0356.

	\section{Generalities}
	\subsection{Group theoretic input data}
	Let $G$ be a reductive group over an algebraically closed field $k$. A choice of maximal torus $T$ determines a root datum $(X^{*}(T),\Phi,X_{*}(T),\Phi^{\vee})$. Given a choice of Borel subgroup $B$ containing $T$, we denote a set of positive roots $\Phi^{+} \coloneqq \{\alpha \in \Phi^{+} \mid U_{-\alpha} \subset B\}$. This choice of positive roots means that ample line bundles on $G/B$ correspond to dominant and regular characters. Denote $\Delta$ the basis of simple roots. A subset $K \subset \Delta$ determines a standard parabolic subgroup $P_{K} \subset G$. Every parabolic subgroup is conjugate to a standard parabolic subgroup $B \subset P_{K} \subset G$ for some $K \subset \Delta$. For sake of exposition, we shall work mostly with standard parabolic subgroups. Denote $W = W(G,T) \coloneqq N_{G}(T)/T$ the Weyl group of $G$ with respect to $T$. Given a subset of the simple roots $I\subset \Delta$, denote $W_{I} \subset W$ the group generated by the simple reflections $\{s_{\alpha}\in W \mid \alpha \in I\}$. Denoting $L_{I}\coloneqq P_{I}/\mathcal{R}_{u}(P_{I})$ the Levi quotient of $P_{I}$, we have that $W(L_{I},T)\cong W_{I}$. 
	
	\subsection{Intersection theory}
	\label{subsec-Chowrings}
	Given a smooth scheme $X$, we denote $\A(X)$ the Chow ring with rational coefficients. If there is an action of $G$ on $X$ then we denote $\A_{G}(X)$ the equivariant Chow ring.
	
	Given a diagram of the form 
	\[\begin{tikzcd}
		\mathcal{Y} \arrow[d, "\pi"'] \arrow[r, "f"]  &  G/P_{I}\\ X
	\end{tikzcd}\] where $\pi$ is a $G$-torsor and $f$ is smooth and $G$-equivariant, pullback of vector bundles yields functors
	\[\begin{tikzcd}
		\Rep(P_{I}) \arrow[r, "\mathcal{V}"] \arrow[d, "\mathcal{W}"]& \Vect(G/P_{I}) \\ \Vect(X)
	\end{tikzcd}\]
	since there is an equivalence of categories $\Rep(P_{I}) \simeq \Vect_{G}(G/P_{I})$. The ‘inverse' functor sends $(E,\phi)$ to $P_{I} \rightarrow GL(E_{x})$ where $x = eP_{I} \in G/P_{I}$. On Chow rings we have 
	\[\begin{tikzcd}
		S^{W_{I}}\cong \A_{G}(G/P_{I}) \arrow[d] \arrow[r, "F_{0}"] & \A(G/P_{I}) \\ \A(X)
	\end{tikzcd}\] where $S \coloneqq \sym(X^{*}(T)_{\mathbb{Q}})$ and the Weyl group $W$ acts on $S$ via its action on $X^{*}(T)$.
	The characteristic homomorphism $F_{0}$ is surjective with kernel $(S^{W}_{+})$ \cite[6.7]{Brion},\cite{Borel} so 
	$$\A(G/P_{I}) \cong S^{W_{I}}/(S_{+}^{W})$$
	The input we require from intersection theory for the proof of Theorem \ref{th-main-intro} is the following classic result.
	\begin{proposition}[Self-intersection formula \cite{LascuMumfordScott}]
		Let $\iota \colon X \hookrightarrow Y$ be a closed embedding of smooth varieties over an algebraically closed field with normal bundle $\mathcal{N}$. Then $\iota^{*}\iota_{*}(\gamma) = c_{\Top}(\mathcal{N}).\gamma$ for all $\gamma \in \A(X)$.
	\end{proposition}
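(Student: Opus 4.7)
The standard approach to the self-intersection formula proceeds by deformation to the normal cone, reducing the general case to that of the zero section of a vector bundle. First I would construct the deformation space $M^{\circ} \to \mathbb{A}^{1}$, a smooth scheme whose fiber over $1$ is $Y$ and whose fiber over $0$ is the total space $\mathbf{N}$ of the normal bundle $\mathcal{N}$, together with a closed subscheme $X \times \mathbb{A}^{1} \hookrightarrow M^{\circ}$ that restricts to $\iota$ over $1$ and to the zero section $s \colon X \hookrightarrow \mathbf{N}$ over $0$. Since the specialization map on Chow groups is a ring homomorphism compatible with both flat pullback and proper pushforward along this family, the asserted identity $\iota^{*}\iota_{*} = c_{\Top}(\mathcal{N}) \cdot (-)$ at the generic fiber is equivalent to the analogous identity $s^{*}s_{*} = c_{\Top}(\mathcal{N})\cdot (-)$ at the special fiber.

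It then remains to establish the formula for the zero section $s \colon X \hookrightarrow \mathbf{N}$ of a vector bundle $\pi \colon \mathbf{N} \to X$ of rank $r = \mathrm{rk}\,\mathcal{N}$. Here $\pi \circ s = \mathrm{id}_{X}$, so $s^{*}\pi^{*} = \mathrm{id}$ on Chow rings, and the projection formula for $s$ yields
$$s_{*}\gamma \;=\; s_{*}(s^{*}\pi^{*}\gamma \cdot 1) \;=\; \pi^{*}\gamma \cdot s_{*}1, \qquad \text{hence} \qquad s^{*}s_{*}\gamma \;=\; \gamma \cdot s^{*}s_{*}1.$$
The problem thus reduces to computing the single class $s^{*}s_{*}1 \in \A(X)$, namely the self-intersection of the zero section. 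This is classically identified with $c_{\Top}(\mathcal{N})$, most transparently by resolving $s_{*}\mathcal{O}_{X}$ by the Koszul complex $0 \to \wedge^{r}\pi^{*}\mathcal{N}^{\vee} \to \cdots \to \mathcal{O}_{\mathbf{N}} \to s_{*}\mathcal{O}_{X} \to 0$, pulling back along $s$, and invoking the splitting principle to recognize the resulting alternating sum $\sum(-1)^{i}[\wedge^{i}\mathcal{N}^{\vee}]$ as $c_{r}(\mathcal{N})$.

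The main technical obstacle in making this argument fully rigorous is the construction of the deformation space and the verification that specialization is a ring homomorphism respecting the relevant pushforward and pullback along the family. This is precisely the content of \cite{LascuMumfordScott}, and for the purposes of the present paper I would cite it rather than reproduce the verification.
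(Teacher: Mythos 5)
The paper does not prove this proposition; it simply cites it as a classical result from \cite{LascuMumfordScott} (cf.\ also Fulton, \emph{Intersection Theory}, Cor.\ 6.3). Your sketch is a correct outline of the standard deformation-to-the-normal-cone argument contained in that reference: reduce to the zero section of the normal bundle, apply the projection formula to isolate $s^{*}s_{*}1$, and identify that class with the top Chern class via the Koszul resolution (or, equivalently, by realizing the zero section as the vanishing locus of the tautological regular section of $\pi^{*}\mathcal{N}$). Since you accurately flag the construction of the deformation space and the compatibility of specialization with pushforward/pullback as the genuine technical content and defer to the reference for it, your proposal is consistent with the paper's own choice to cite rather than reprove.
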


	\subsection{Standard sub flag varieties}
	
	 Let $I,J \subset \Delta$ and fix an embedding $L_{J} \hookrightarrow G$. Now, $L_{J} \cap P_{I}$ is a parabolic subgroup of $L_{J}$ and the closed embedding $L_{J} \hookrightarrow G$ descends to a closed embedding $\iota\colon L_{J}/L_{J}\cap P_{I} \hookrightarrow G/P_{I}$ (see for example \cite[1.8]{Jantzen}). The Levi subgroup $L_{J}$ has maximal torus $T$ and Borel subgroup $B \cap L_{J}$.  
	%\subsection{Homogeneous vector bundles on a flag variety}
	%Given a diagram of the form 
	%\[\begin{tikzcd}
	%	 & G/P \arrow[d] \\ X \arrow[r] & \left[*/P\right]
	%\end{tikzcd}\] where $P\subset G$ is a parabolic subgroup, $[*/P]$ denotes the classifying stack of $P$ and $X$ is a scheme then by considering pullback of vector bundles we obtain functors
	%\[\begin{tikzcd}
	%	\Rep(P) \arrow[r, "\mathcal{W}"] \arrow[d, "\mathcal{V}"]& \Vect(X) \\ \Vect(G/P)
	%\end{tikzcd}\]
	%The functor $\mathcal{V}$ induces an equivalence of categories $\Rep(P) \simeq \Vect_{G}(G/P)$. The ‘inverse' functor sends $(E,\phi)$ to $P \rightarrow GL(E_{x})$ where $x = eP \in G/P$.
	
	%\subsection{Borel's description of the Chow rings}
	%\label{subsec-Chowrings}
	%The Bruhat stratification gives an affine stratification of $G/P$ (a stratification by affine spaces, see e.g. \cite[13.8]{Jantzen})  which implies that the cycle class map $\A(G/P) \rightarrow \coH(G/P,\mathbb{Q})$ is an isomorphism. Borel \cite{Borel} gave a presentation for this ring as follows. Let $S \coloneqq \sym(X^{*}(T)_{\mathbb{Q}})$. Then $W$ acts on $S$ via its action on $T$ and 
	%$$H^{2\bullet}(G/P_{I},\mathbb{Q}) \cong S^{W_{P}}/(S_{+}^{W})$$

	\subsection{Pullback \& Pushforward}
	
	Suppose we have $\iota\colon L_{J}/L_{J}\cap P_{I} \hookrightarrow G/P_{I}$ as above. There is an inclusion of Weyl groups $W(L_{J},T) \cong W_{J} \subset W$. Pullback gives a map of Chow rings
	$$\iota^{*} \colon S^{W_{I}}/(S_{+}^W) \cong \A(G/P_{I}) \longrightarrow \A(L_{J}/L_{J}\cap P_{I}) \cong S^{W_{I\cap J}}/(S_{+}^{W_{J}})$$
	Moreover, if we consider the diagram
	\[\begin{tikzcd}
	L_{J}/L_{J} \cap B \arrow[r, hook, "\nu"] \arrow[d, "\theta"]& G/B \arrow[d, "\pi"]\\ L_{J}/L_{J} \cap P_{I} \arrow[r, hook, "\iota"]& G/P_{I}
	\end{tikzcd}\] 
	with the full flag varieties then pullback gives us
	\[\begin{tikzcd}
	S/(S_{+}^{W_{J}}) & \arrow[l, two heads] S/(S_{+}^{W})\\ S^{W_{I\cap J}}/(S_{+}^{W_{J}}) \arrow[u, hook]& S^{W_{I}}/(S_{+}^{W}) \arrow[l] \arrow[u, hook] 
	\end{tikzcd}\]
	Here the top map is simply projection as $W_{J} \subset W$ implies that $S_{+}^{W} \subset S_{+}^{W_{J}}$. The vertical maps $\theta^{*}$ and $\pi^{*}$ are injective so we can often reduce to the case of full flag varieties, as in \ref{sec-normbun}. %Given the projection formula and the following lemma, it suffices to compute $\iota_{*}(1)$ to describe the whole pushforward map $\iota_{*}$.
	%\begin{lemma}
	%	The pullback map $\iota^{*}$ is surjective.
	%\end{lemma}
	
	%\begin{proof}
	%	$L_{J}/L_{J}\cap P_{I} \simeq P_{J}/P_{I} = \overline{G_{w_{0,J}}}/P_{I} = X_{w_{0,J}}$	is a Schubert variety in $G/P_{I}$
	%	where $w_{0,J} \in W_{J} \subset W$ is the longest element of $W_{J}$ and $G_{w} = B\dot{w}P \subset G$ with closure given by $\overline{G_{w}} = \sqcup_{v \leq w}G_{v}$ \cite[13.3 (4)]{Jantzen}.
		
	%	The Bruhat stratification on $L/L\cap P$ is a refinement of the Bruhat stratification on $G/P$ so the pullback map is surjective on cohomology (at least in characteristic zero, see \cite[2.1]{GasharovReiner}) and thus on Chow rings.
	%\end{proof}
	
	\subsection{Normal bundle}
	\label{sec-normbun}
	In order to utilise the self-intersection formula we require some knowledge of the Chern classes of the normal bundle.
	\begin{proposition}
		\label{cor-top-chern-class}Let $\mathcal{N}$ denote the normal bundle of the closed embedding $\iota \colon L_{J}/L_{J}\cap P_{I} \hookrightarrow G/P_{I}$. Then its Chern polynomial is
		$$c_{t}(\theta^{*}\mathcal{N}) = \prod_{\alpha \in \Phi^{+} \setminus (\Phi_{I}^{+} \cup \Phi_{J}^{+})} (1 + \left[\alpha\right] t)$$
	\end{proposition}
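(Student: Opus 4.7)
The plan is to recognise $\mathcal{N}$ as the $L_J$-equivariant vector bundle on $L_J/L_J\cap P_I$ associated to an explicit $L_J\cap P_I$-representation, and then to pass via $\theta^*$ to the full flag variety $L_J/L_J\cap B$ where any such bundle admits a filtration by line bundles indexed by $T$-characters.

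First, apply the equivalence $\Rep(L_J\cap P_I)\simeq \Vect_{L_J}(L_J/L_J\cap P_I)$ recalled in \ref{subsec-Chowrings} to the short exact sequence of $L_J$-equivariant tangent bundles
\[
0 \to T_{L_J/L_J\cap P_I} \to \iota^*T_{G/P_I} \to \mathcal{N} \to 0.
\]
The outer terms correspond to the natural modules $\mathfrak{l}_J/(\mathfrak{l}_J\cap\mathfrak{p}_I)$ and $\mathfrak{g}/\mathfrak{p}_I$, with the morphism between them induced by $\mathfrak{l}_J\hookrightarrow\mathfrak{g}$; hence $\mathcal{N}$ corresponds to the $L_J\cap P_I$-module $\mathfrak{g}/(\mathfrak{p}_I+\mathfrak{l}_J)$.

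Next, using the paper's convention $U_{-\alpha}\subset B$ for $\alpha\in\Phi^+$, one computes $\mathfrak{p}_I = \mathfrak{t}\oplus\bigoplus_{\alpha\in\Phi^-\cup\Phi_I^+}\mathfrak{g}_\alpha$ and $\mathfrak{l}_J = \mathfrak{t}\oplus\bigoplus_{\alpha\in\Phi_J}\mathfrak{g}_\alpha$, so the root spaces in $\mathfrak{p}_I + \mathfrak{l}_J$ are precisely those indexed by $\Phi^-\cup\Phi_I^+\cup\Phi_J^+$. As a $T$-module, $\mathfrak{g}/(\mathfrak{p}_I+\mathfrak{l}_J)$ therefore decomposes with one-dimensional eigenspaces of weights exactly the elements of $\Phi^+\setminus(\Phi_I^+\cup\Phi_J^+)$.

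Finally, restrict the $L_J\cap P_I$-module along the inclusion $L_J\cap B\hookrightarrow L_J\cap P_I$. Since $L_J\cap B$ is solvable, Lie-Kolchin produces a complete $L_J\cap B$-stable flag whose successive quotients are one-dimensional and, viewed via the projection $L_J\cap B\twoheadrightarrow T$, realise the $T$-weights listed above in some order. Applying the analogous functor $\mathcal{V}$ for $L_J/L_J\cap B$ to this flag produces a filtration of $\theta^*\mathcal{N}$ by sub-bundles whose successive quotients are the line bundles $\mathcal{V}(\alpha)$, each with first Chern class $[\alpha]\in\A(L_J/L_J\cap B)$. Multiplicativity of the total Chern class along this filtration then yields the stated product. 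The only delicate point is the bookkeeping for the root spaces of $\mathfrak{p}_I+\mathfrak{l}_J$ under the unusual sign convention; the rest is formal.
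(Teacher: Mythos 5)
Your proof is correct and follows essentially the same strategy as the paper's: both start from the normal-bundle short exact sequence $0 \to \mathcal{T}_{L_J/L_J\cap P_I} \to \iota^*\mathcal{T}_{G/P_I} \to \mathcal{N} \to 0$ and exploit the equivalence $\Rep(Q) \simeq \Vect_{H}(H/Q)$ to reduce to a weight count. The only stylistic difference is that you identify $\mathcal{N}$ directly as the homogeneous bundle associated to $\mathfrak{g}/(\mathfrak{p}_I + \mathfrak{l}_J)$ and read off its $T$-weights, whereas the paper computes the Chern polynomials of both tangent bundles (with weight sets $\Phi^+\setminus\Phi_I^+$ and $\Phi_J^+\setminus\Phi_{I\cap J}^+$) and leaves the division to the reader.
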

	\begin{proof}
		Firstly, \[\begin{tikzcd}
			0 \arrow[r]& \mathcal{T}_{L/L\cap P_{I}} \arrow[r]& \iota^{*}\mathcal{T}_{G/P_{I}} \arrow[r]& \mathcal{N} \arrow[r] & 0
		\end{tikzcd}\] so $c_{t}(\theta^{*}\iota^{*}\mathcal{T}_{G/P_{I}}) = c_{t}(\theta^{*}\mathcal{N})c_{t}(\theta^{*}\mathcal{T}_{L_{J}/L_{J}\cap P_{I}})$.
		
		\vspace{3mm}
		
		Secondly, the tangent bundle is $\mathcal{T}_{G/P_{I}} = \mathcal{V}(\rho)$ where $\rho$ is the composition $$ P_{I} \hookrightarrow G \xrightarrow{\ad} GL(\mathfrak{g}) \rightarrow GL(\mathfrak{g}/\mathfrak{p})$$ with the third map being induced by projection. This is because there is an equivalence of categories $\Vect_{G}(G/P) \simeq \Rep(P)$ given by $E \mapsto (P \rightarrow GL(E_{x}))$ where $x = eP \in G/P$. The tangent bundle is $G$-equivariant. Taking tangent spaces of the projection $G \rightarrow G/P$ we get a surjective $P$-equivariant linear map $\mathfrak{g} = T_{e}(G) \rightarrow T_{x}(G/P) = \mathcal{T}_{G/P,x}$ with kernel $\mathfrak{p}$. Hence, $\mathcal{T}_{G/P,x} \simeq \mathfrak{g}/\mathfrak{p}$. The action of $G$ on $\mathfrak{g}$ is given by the adjoint representation.
		
		\vspace{3mm}
		
		Now, the roots of $\rho$ are $\Phi^{+} \setminus\Phi^{+}_{I}$ $$c_{t}(\pi^{*}\mathcal{T}_{G/P})  = \prod_{\alpha \in \Phi^{+} \setminus \Phi_{I}^{+}} (1 + \left[\alpha\right] t)$$ Similarly, $c_{t}(\theta^{*}\mathcal{T}_{L/L\cap P}) = \prod_{\alpha \in \Phi_{J}^{+} \setminus \Phi_{I \cap J}^{+}} (1 + \left[\alpha\right] t)$.
	\end{proof}

	\section{Siegel case}
	In this section we prove Theorem \ref{th-main-intro}. This confirms Conjecture \ref{conj-bdy} for the Siegel modular variety $\mathcal{A}_{g}$. 
	
	\subsection{Group data}
	
	Let $V$ be a $2g$ dimensional symplectic vector space with symplectic form $\psi$ given by $\begin{pmatrix}
		0 & -J \\ J & 0
	\end{pmatrix}$ 
	where $J = \antidiag(1,\ldots,1)$. Define $G \coloneqq GSp(V,\psi)$. Choose the maximal torus $$T = \diag(zt_{1},\ldots,zt_{g},\bar{z}t_{g}^{-1},\ldots, \bar{z}t_{1}^{-1})$$ and a basis $e_{i}\colon T \mapsto zt_{i}$ of $X^{*}(T) \cong \mathbb{Z}^{g}$. The root system $\Phi(G,T)$ is $C_{g}$. The choice of Borel $B \subset G$ consisting of lower triangular matrices gives simple roots $\Delta = \{e_{1}-e_{2},\ldots,e_{g-1}-e_{g},2e_{g}\}$. Let $I = \Delta \setminus \{2e_{g}\}$ and $J = \Delta \setminus \{e_{1}-e_{2}\}$.
	This gives 
	$$\Phi_{I}^{+} = \{e_{i}- e_{j} \mid 1\leq i \neq j \leq g\}$$
	$$\Phi_{J}^{+} = \{e_{i}\pm e_{j} \mid 2\leq i \leq j \leq g\}$$
	$$\Phi^{+}\setminus (\Phi_{I}^{+}\cap \Phi_{J}^{+}) = \{e_{1}+ e_{i} \mid 1\leq i \leq g\}$$ and a commuting diagram of sub flag varieties:
	\[\begin{tikzcd}
		L_{J}/B \cap L_{J} \arrow[r, hook, "\nu"] \arrow[d, "\theta"]& G/B \arrow[d, "\pi"]\\ L_{J}/P_{I} \cap L_{J} \arrow[r, hook, "\iota"]& G/P_{J}
	\end{tikzcd}\] 
	
	\subsection{Weyl groups}
	The Weyl group can be described as $$W = \{\sigma \in S_{2g} \mid \sigma(i) + \sigma(2g+1-i) = 2g+1 \text{ for } i=1,\ldots,g\} \cong S_{g}\rtimes\{\pm 1\}^{g}$$ The simple reflections are $s_{e_{i}-e_{i+1}} = (i,i+1)(2g-i,2g+1-i)$ and $s_{2e_{g}} = (g,g+1)$.
	$$W_{I} = \langle s_{e_{1}-e_{2}},\ldots, s_{e_{g-1}-e_{g}}\rangle \cong S_{g} \subset W$$ 

\subsection{Chow rings \& pullback map}
The picture for the Chow rings is the following:
\[\begin{tikzcd}
	S/(S_{+}^{W_{J}})\cong \A(L_{J}/B \cap L_{J}) & \A(G/B)\cong S/(S_{+}^{W}) \arrow[l, twoheadrightarrow, "\nu^{*}"]\\ S^{W_{I\cap J}}/(S_{+}^{W_{J}})\cong \A(L_{J}/P_{I} \cap L_{J})  \arrow[u, hook, "\theta^{*}"]& \A(G/P_{J})\cong S^{W_{I}}/(S_{+}^{W}) \arrow[l, "\iota^{*}"] \arrow[u, hook, "\pi^{*}"]
\end{tikzcd}\] 
We describe these rings and the pullback maps between them explicitly in terms of generators and relations. Denote $\lambda_{i} \coloneqq \sigma_{i}(e_{1},\ldots ,e_{g})$ (resp. $\widetilde{\lambda}_{i} \coloneqq \sigma_{i}(e_{1},\ldots,e_{g-1})$) the $i$-th elementary symmetric polynomial in $\{e_{1},\ldots,e_{g}\}$ (resp. $\{e_{1},\ldots,e_{g-1}\}$).
\begin{lemma}
	\label{le-pullback-map}
	 We have the following.$$S^{W_{I}}/(S^{W}_{+}) \cong \mathbb{Q}[\lambda_{1},\ldots, \lambda_{g}]/((1+\lambda_{1}+\ldots+\lambda_{g})(1-\lambda_{1}+\ldots+(-1)^{g}\lambda_{g})-1)$$
		and  $$S^{W_{I\cap J}}/(S^{W_{J}}_{+}) \cong \mathbb{Q}[\widetilde{\lambda}_{1},\ldots, \widetilde{\lambda}_{g-1}]/((1+\widetilde{\lambda}_{1}+\ldots+\widetilde{\lambda}_{g})(1-\widetilde{\lambda}_{1}+\ldots+(-1)^{g-1}\widetilde{\lambda}_{g-1})-1)$$
		and the pullback map $\iota^{*}\colon \A(G/P_{I}) \rightarrow \A(L_{J}/L_{J}\cap P_{I})$ is surjective with kernel $(\lambda_{g})$.
		
\end{lemma}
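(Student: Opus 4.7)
My plan is to compute the four invariant rings $S^{W}$, $S^{W_I}$, $S^{W_J}$, $S^{W_{I \cap J}}$ explicitly and then read off the pullback $\iota^{*}$ on generators. First I would identify the Weyl groups from the given $C_g$ root datum: $W_I \cong S_g$ permutes $e_1, \ldots, e_g$; the full group $W = S_g \rtimes \{\pm 1\}^g$ adds sign flips; $W_{I \cap J} \cong S_{g-1}$ permutes $\{e_2, \ldots, e_g\}$ while fixing $e_1$; and $W_J$ is the type $C_{g-1}$ Weyl group acting by permutations and sign changes on $\{e_2, \ldots, e_g\}$ with $e_1$ fixed. By the fundamental theorem of symmetric functions, $S^{W_I}$ is polynomial in $\lambda_1, \ldots, \lambda_g$ (together with the multiplier character) and $S^W$ is polynomial in $p_i := \sigma_i(e_1^2, \ldots, e_g^2)$ (together with the multiplier). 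The multiplier lies in $S^W_+$ and so disappears in the quotient, and can be ignored.

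The heart of the computation of the first presentation is to repackage the generators of $S^W_+$ using the polynomial identity
$$\prod_{i=1}^{g}(1 + e_i x)(1 - e_i x) = \prod_{i=1}^{g}(1 - e_i^2 x^2),$$
whose left-hand side expands to $\left(\sum_k \lambda_k x^k\right)\left(\sum_k (-1)^k \lambda_k x^k\right)$ and whose right-hand side expands to $\sum_k (-1)^k p_k x^{2k}$. Modulo $S^W_+$ the right-hand side collapses to $1$, producing the stated relation verbatim. A small check verifies that the odd-degree coefficients on the left vanish identically, so the $g$ even-degree coefficients capture exactly the $g$ generators $p_1, \ldots, p_g$ of $S^W_+$ and nothing more. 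The second presentation is then proved by the same identity in the $g-1$ variables $e_2, \ldots, e_g$, after observing that $e_1 \in S^{W_J}_+$ is killed in the quotient; here I read $\widetilde{\lambda}_i$ as $\sigma_i(e_2, \ldots, e_g)$.

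For the claim about $\iota^{*}$, I would compute the map on generators by expanding
$$\lambda_i = \sigma_i(e_1, \ldots, e_g) = \widetilde{\lambda}_i + e_1 \widetilde{\lambda}_{i-1}$$
and reducing modulo $e_1 \in S^{W_J}_+$. This gives $\iota^{*}(\lambda_i) = \widetilde{\lambda}_i$ for $i \le g-1$ and $\iota^{*}(\lambda_g) = 0$, proving surjectivity and the inclusion $(\lambda_g) \subseteq \ker \iota^{*}$. The reverse inclusion is then free: setting $\lambda_g = 0$ in the first presentation yields exactly the second presentation (up to renaming $\lambda_i \mapsto \widetilde{\lambda}_i$), so $\iota^{*}$ descends to an isomorphism $\A(G/P_I)/(\lambda_g) \xrightarrow{\sim} \A(L_J/L_J \cap P_I)$. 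I expect the main obstacle to be purely bookkeeping --- correctly tracking the four Weyl groups, the multiplier character, and the form of the pullback --- since no deep input beyond the fundamental theorem of symmetric functions and the polynomial factorisation above is required.
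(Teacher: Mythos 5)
Your proof is correct and follows the same overall structure as the paper's, but two of your technical steps are genuinely different and arguably cleaner. Where you package the relation via the generating-function identity $\prod_i(1+e_i x)(1-e_i x) = \prod_i(1-e_i^2 x^2)$, the paper instead states a separate combinatorial identity (its Lemma~\ref{le-symm-poly}, that the degree-$2l$ part of $(1+\lambda_1+\cdots)(1-\lambda_1+\cdots)-1$ equals $(-1)^l\sigma_l(e_1^2,\ldots,e_g^2)$) and proves it by induction on $g$. Your one-line factorisation proves the same thing without induction and makes the vanishing of odd coefficients transparent. For the kernel, the paper argues $\ker(\nu^*) = (e_1)$ in $S/(S_+^W)$ and then asserts $\ker(\iota^*) = \ker(\nu^*) \cap S^{W_I}/(S_+^W) = (\lambda_g)$; the intersection-with-a-subring step is left implicit there. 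Your route --- computing $\iota^*(\lambda_i)$ directly, noting $(\lambda_g) \subseteq \ker\iota^*$, and then closing the reverse inclusion by observing that setting $\lambda_g = 0$ in the first presentation reproduces the second (so $\iota^*$ descends to an isomorphism on $\A(G/P_I)/(\lambda_g)$) --- is more self-contained and removes that implicit step. You also explicitly track the multiplier character of $GSp_{2g}$ and observe it lies in $S^W_+$; the paper's claim $X^*(T)\cong\mathbb{Z}^g$ quietly suppresses this, so your remark is a genuine (if minor) improvement in bookkeeping. One small point of divergence with the paper as written: you take $\widetilde\lambda_i = \sigma_i(e_2,\ldots,e_g)$, while the paper's displayed definition is $\sigma_i(e_1,\ldots,e_{g-1})$; your reading is the one that makes $\iota^*\lambda_i = \widetilde\lambda_i$ literally true and matches the structure of $W_{I\cap J}$ fixing $e_1$, so this is almost certainly what the paper intends, and flagging it was the right call.
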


\begin{proof}
	$W_{I} = \langle s_{e_{1}-e_{2}},\ldots, s_{e_{g-1}-e_{g}}\rangle \cong S_{g}$ implies that $S^{W_{I}} \cong \mathbb{Q}[\lambda_{1},\ldots, \lambda_{g}]$. If $f \in S=\mathbb{Q}[e_{1},\ldots,e_{g}]$ is $W$ invariant then it is invariant under the action of $\{\pm 1\}^{g}$ and thus $f = g(e_{1}^{2},\ldots,e_{g}^{2})$ for some polynomial $g$, so $$\mathbb{Q}[e_{1}.\ldots,e_{g}]^{W}\cong\mathbb{Q}[e_{1}^{2},\ldots,e_{g}^{2}]^{S_{g}}\cong \langle\sigma_{1}(e_{1}^{2},\ldots,e_{g}^{2}),\ldots,\sigma_{g}(e_{1}^{2},\ldots,e_{g}^{2})\rangle$$ by the fundamental theorem of symmetric polynomials.
 
    The required description of $S_{+}^{W}$ follows from Lemma \ref{le-symm-poly} which says that the homogeneous part of $(1+\lambda_{1}+\ldots+\lambda_{g})(1-\lambda_{1}+\ldots+(-1)^{g}\lambda_{g})-1$ of degree $k=2l$ is equal to $(-1)^{l}\sigma_{i}(e_{1}^{2},\ldots,e_{g}^{2})$. The same argument works for $S^{W_{I\cap J}}/(S^{W_{J}}_{+})$.
	
	\vspace{3mm}
	
	Now, we have that $e_{1} \in S^{W_{J}}$, since $s_{e_{i}-e_{i+1}}(e_{1}) = e_{1}$ for $i>2$ and similarly for $s_{2e_{g}}$. Firstly, this implies that $\iota^{*}$ is surjective, since $\iota^{*}\lambda_{i} = \widetilde{\lambda_{i}}$. Moreover,  $\ker(\nu^{*}) = (e_{1})$ and so $\ker(\iota^{*}) = \ker(\nu^{*})\cap S^{W_{I}}/(S_{+}^{W}) = (\lambda_{g})$. 
\end{proof}
We record here for the sake of completeness the following result about symmetric polynomials.
\begin{lemma}
	\label{le-symm-poly}
	 Let $g\geq 1$. For all $1\leq l < g$ the following homogeneous symmetric polynomials in $\mathbb{Q}[x_{1},\ldots,x_{g}]$ of degree $2l$ are equal: $$(-1)^{l}\sigma_{l}(x_{1}^{2},\ldots,x_{g}^{2}) = \sum_{i+j=2l}(-1)^{j}\sigma_{i}(x_{1},\ldots,x_{g})\sigma_{j}(x_{1},\ldots,x_{g})$$
\end{lemma}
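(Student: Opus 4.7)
The natural approach is to recognise both sides of the claimed identity as coefficients in a single generating function factorisation, namely the identity $(1-x_i t)(1+x_i t) = 1 - x_i^2 t^2$. I would introduce the formal power series
$$E(t) \coloneqq \prod_{i=1}^{g}(1+x_{i}t) = \sum_{k\geq 0}\sigma_{k}(x_{1},\ldots,x_{g})t^{k}$$
and observe that $E(-t) = \sum_{k\geq 0}(-1)^{k}\sigma_{k}(x_{1},\ldots,x_{g})t^{k}$. Multiplying over all $i$ gives
$$E(t)E(-t) = \prod_{i=1}^{g}(1-x_{i}^{2}t^{2}) = \sum_{l\geq 0}(-1)^{l}\sigma_{l}(x_{1}^{2},\ldots,x_{g}^{2})t^{2l}.$$

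The plan is then to compute $E(t)E(-t)$ by multiplying the two series directly. The coefficient of $t^{k}$ in the product is exactly $\sum_{i+j=k}(-1)^{j}\sigma_{i}(x_{1},\ldots,x_{g})\sigma_{j}(x_{1},\ldots,x_{g})$, so comparing with the factorised form shows that this sum vanishes for odd $k$ and equals $(-1)^{l}\sigma_{l}(x_{1}^{2},\ldots,x_{g}^{2})$ when $k=2l$. This is precisely the claimed identity.

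There is no real obstacle here; the identity is a standard consequence of the Newton-type factorisation, and the restriction $1\leq l < g$ in the statement is not needed for the identity itself (for $l > g$ both sides vanish, and for $l=g$ one recovers $\sigma_{g}(x_{1}^{2},\ldots,x_{g}^{2})=\sigma_{g}(x_{1},\ldots,x_{g})^{2}$). The only point requiring minimal care is bookkeeping the sign conventions when matching the coefficient of $t^{2l}$, which is routine.
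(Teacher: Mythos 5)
Your proof is correct, and it takes a genuinely different route from the paper. The paper proves the identity by induction on the number of variables $g$, using the recursion $\sigma_{i}(x_{1},\ldots,x_{g}) = \sigma_{i}(x_{1},\ldots,x_{g-1}) + x_{g}\sigma_{i-1}(x_{1},\ldots,x_{g-1})$ to peel off the last variable, split the convolution sum, and invoke the inductive hypothesis twice. Your argument instead packages everything into the generating function $E(t) = \prod_{i}(1+x_{i}t)$ and reads both sides of the identity off the factorisation $E(t)E(-t) = \prod_{i}(1-x_{i}^{2}t^{2})$: the left side as the coefficient of $t^{2l}$ in the factored product, the right side as the Cauchy product of $E(t)$ and $E(-t)$. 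This is shorter, avoids the index bookkeeping, and, as you note, proves the identity uniformly for all $l$ rather than just $1\leq l < g$; the paper's restriction is an artifact of the inductive scheme. The one thing the paper's elementary proof buys is self-containedness in the sense of not appealing to formal power series, but for a statement of this kind the generating-function argument is the more transparent one.
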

\begin{proof}
	Induction on $g$. Base case. Suppose for induction that it holds for all $g_{0}<g$. We make use of the fact that for $1\leq i < g$: $$\sigma_{i}(x_{1},\ldots,x_{g}) = \sigma_{i}(x_{1},\ldots,x_{g-1}) + x_{g}\sigma_{i-1}(x_{1},\ldots,x_{g-1})$$  Indeed, fixing $1\leq l < g$, this yields
	\begin{equation*}
		\begin{aligned}
			& \sum_{i+j=2l}(-1)^{j}\sigma_{i}(x_{1},\ldots,x_{g})\sigma_{j}(x_{1},\ldots,x_{g}) 
			%\\&= 
			%\sum_{i+j=2l}(-1)^{j}(\sigma_{i}(x_{1},\ldots,x_{g-1}) + x_{g}\sigma_{i-1}(x_{1},\ldots,x_{g-1}))(\sigma_{j}(x_{1},\ldots,x_{g-1}) + x_{g}\sigma_{j-1}(x_{1},\ldots,x_{g-1}))
			%\\&= \sum_{i+j=2l}(-1)^{j}\sigma_{i}(x_{1},\ldots,x_{g-1})\sigma_{j}(x_{1},\ldots,x_{g-1}) \\&+ x_{g}\sum_{i+j=2l}(-1)^{j}\sigma_{i-1}(x_{1},\ldots,x_{g-1})\sigma_{j}(x_{1},\ldots,x_{g-1}) \\&+ x_{g}\sum_{i+j=2l}(-1)^{j}\sigma_{i}(x_{1},\ldots,x_{g-1})\sigma_{j-1}(x_{1},\ldots,x_{g-1}) \\&+ x_{g}^{2}\sum_{i+j=2l}(-1)^{j}\sigma_{i-1}(x_{1},\ldots,x_{g-1})\sigma_{j-1}(x_{1},\ldots,x_{g-1}) 
			\\&= \sum_{i+j=2l}(-1)^{j}\sigma_{i}(x_{1},\ldots,x_{g-1})\sigma_{j}(x_{1},\ldots,x_{g-1}) \\&- x_{g}^{2}\sum_{i+j=2l-2}(-1)^{j}\sigma_{i}(x_{1},\ldots,x_{g-1})\sigma_{j}(x_{1},\ldots,x_{g-1})
		\end{aligned}
	\end{equation*}
	Using the inductive hypothesis for $g_{0} = g-1$ we have that 
	\begin{equation*}
		\begin{aligned}
			&\sum_{i+j=2l}(-1)^{j}\sigma_{i}(x_{1},\ldots,x_{g-1})\sigma_{j}(x_{1},\ldots,x_{g-1}) \\&- x_{g}^{2}\sum_{i+j=2l-2}(-1)^{j}\sigma_{i}(x_{1},\ldots,x_{g-1})\sigma_{j}(x_{1},\ldots,x_{g-1}) \\&= (-1)^{l}\sigma_{l}(x_{1}^{2},\ldots,x_{g-1}^{2}) + 
			(-1)^{l}x_{g}^{2}\sigma_{l-1}(x_{1}^{2},\ldots,x_{g-1}^{2}) \\&=
			(-1)^{l}\sigma_{l}(x_{1}^{2},\ldots,x_{g}^{2})
		\end{aligned}
	\end{equation*}
\end{proof}
\subsection{Proof of Theorem \ref{th-main-intro}}
\begin{proposition}
	\label{cor-chern-normal}
	Let $\mathcal{N}$ denote the normal bundle of the embedding $\iota$. The top Chern class vanishes $c_{g}(\mathcal{N}) = 0 \in \A(L_{J}/L_{J}\cap P_{I})$.
\end{proposition}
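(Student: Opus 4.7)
The plan is to pull the statement back to the full flag variety $L_J/B\cap L_J$, where the Chow ring is just $S/(S_+^{W_J})$, and exploit the very explicit list of ``missing'' roots computed in the Siegel group-data section. Since $\theta^{*}\colon \A(L_J/L_J\cap P_I) \hookrightarrow \A(L_J/B\cap L_J)$ is injective, it suffices to show $\theta^{*}c_g(\mathcal{N}) = c_g(\theta^{*}\mathcal{N}) = 0$ in $S/(S_+^{W_J})$.

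First I would apply Proposition \ref{cor-top-chern-class} together with the computation $\Phi^{+}\setminus(\Phi_I^{+}\cup\Phi_J^{+}) = \{e_1+e_i \mid 1\leq i\leq g\}$ from the group-data subsection. This gives the concrete formula
$$c_g(\theta^{*}\mathcal{N}) \;=\; \prod_{i=1}^{g}(e_1+e_i) \;=\; 2e_1\cdot\prod_{i=2}^{g}(e_1+e_i)$$
in $S = \mathbb{Q}[e_1,\ldots,e_g]$. The crucial observation is then that $e_1$ is $W_J$-invariant: the simple reflections $s_{e_i-e_{i+1}}$ for $i\geq 2$ and $s_{2e_g}$ that generate $W_J$ all act trivially on $e_1$. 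Hence $e_1 \in S_+^{W_J}$, the entire product lies in the ideal $(S_+^{W_J})$, and so $c_g(\theta^{*}\mathcal{N}) = 0$ in $S/(S_+^{W_J})$.

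Combining these two steps with the injectivity of $\theta^{*}$ immediately yields $c_g(\mathcal{N}) = 0$ in $\A(L_J/L_J\cap P_I)$, which is the statement of the proposition. There is no real obstacle here; the whole argument hinges on the group-theoretic accident that, in the Siegel case, every root contributing to the normal bundle contains $e_1$, and $e_1$ happens to be invariant under the relevant sub-Weyl group. The main ``work'' (identifying the precise set $\Phi^{+}\setminus(\Phi_I^{+}\cup\Phi_J^{+})$ and describing $S_+^{W_J}$) has already been carried out in the earlier subsections, so the proof should reduce to citing those facts and performing the one-line factorisation above.
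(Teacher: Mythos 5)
Your proof is correct and follows essentially the same route as the paper: reduce to the full flag variety via injectivity of $\theta^{*}$, invoke Proposition \ref{cor-top-chern-class} to write $c_{g}(\theta^{*}\mathcal{N})$ as the product of the roots in $\Phi^{+}\setminus(\Phi_{I}^{+}\cup\Phi_{J}^{+})$, observe that $2e_{1}$ is one of these roots while $e_{1}$ is $W_{J}$-invariant, and conclude the product dies modulo $(S_{+}^{W_{J}})$. The only difference is cosmetic: you spell out the factorisation $2e_{1}\cdot\prod_{i=2}^{g}(e_{1}+e_{i})$ explicitly, whereas the paper just notes that $2e_{1}$ appears among the factors.
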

\begin{proof}
	Firstly, it suffices to prove that $c_{g}(\theta^{*}\mathcal{N}) = 0$ since $\theta^{*}$ is injective. By Proposition \ref{cor-top-chern-class}, $c_{g}(\theta^{*}\mathcal{N}) = \prod_{\alpha \in \Phi^{+} \setminus (\Phi_{I}^{+} \cup \Phi_{J}^{+})}\alpha$.  We have that $e_{1} \in S^{W_{J}}$ since $s_{e_{i}-e_{i+1}}(e_{1}) = e_{1}$ for $i>2$ and similarly for $s_{2e_{g}}$. Moreover, $2e_{1} \in \Phi^{+} \setminus (\Phi_{I}^{+} \cup \Phi_{J}^{+})$. Thus, $c_{g}(\theta^{*}\mathcal{N}) = 0$.
\end{proof}
\begin{theorem}[Theorem \ref{th-main-intro}]
	\label{th-main}
	$[L_{J}/L_{J}\cap P_{I}] = a\lambda_{g} \in \A(G/P_{I})$ for some $a \neq 0$.
\end{theorem}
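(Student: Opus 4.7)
The plan is to assemble three ingredients already in place: the self-intersection formula, the vanishing of $c_g(\mathcal{N})$ from Proposition \ref{cor-chern-normal}, and the identification $\ker(\iota^*) = (\lambda_g)$ from Lemma \ref{le-pullback-map}. Writing $[L_J/L_J\cap P_I] = \iota_*(1)$, the first two ingredients combine to place this class in $(\lambda_g)$; a degree count then forces it to be a scalar multiple of $\lambda_g$; and nonvanishing of the scalar follows by pairing against an ample class.

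First, the self-intersection formula applied to $\iota_*(1)$ gives
$$\iota^*\iota_*(1) = c_{\Top}(\mathcal{N}) \cdot 1 = c_g(\mathcal{N}).$$
By Proposition \ref{cor-chern-normal} this vanishes, so $\iota_*(1) \in \ker(\iota^*)$, and Lemma \ref{le-pullback-map} identifies this kernel as the principal ideal $(\lambda_g) \subset \A(G/P_I)$.

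Next comes a degree count. The codimension of $L_J/L_J\cap P_I$ in $G/P_I$ equals $|\Phi^+\setminus(\Phi_I^+\cup \Phi_J^+)| = g$, matching the degree of $\lambda_g$ in $\A(G/P_I)$. Consequently $\iota_*(1)$ and $\lambda_g$ are homogeneous of the same degree, and since $\A(G/P_I)$ is graded the containment $\iota_*(1) \in (\lambda_g)$ forces $\iota_*(1) = a\lambda_g$ for some $a \in \mathbb{Q}$.

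It remains to check $a \neq 0$. The plan is to pair $\iota_*(1)$ against a complementary top-degree class via the projection formula. Taking any ample line bundle $\mathcal{L}$ on the smooth projective variety $G/P_I$,
$$\int_{G/P_I} \iota_*(1) \cdot c_1(\mathcal{L})^{g(g-1)/2} = \int_{L_J/L_J\cap P_I} c_1(\iota^*\mathcal{L})^{g(g-1)/2},$$
and the right-hand side is the degree of $L_J/L_J\cap P_I$ against the ample class $c_1(\iota^*\mathcal{L})$ on this $g(g-1)/2$-dimensional projective flag variety, hence strictly positive. This forces $\iota_*(1) \neq 0$, and therefore $a \neq 0$. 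I do not foresee any significant obstacle: all the substantive geometry and combinatorics have been packaged into Proposition \ref{cor-chern-normal} and Lemma \ref{le-pullback-map}, and what remains is a standard chain of applications of the self-intersection and projection formulas.
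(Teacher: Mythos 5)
Your proposal is correct and follows essentially the same route as the paper: apply the self-intersection formula, invoke Proposition \ref{cor-chern-normal} to get $\iota^*\iota_*(1)=0$, use Lemma \ref{le-pullback-map} to identify $\ker(\iota^*)=(\lambda_g)$, and conclude nonvanishing from projectivity. Your degree count and projection-formula pairing just make explicit what the paper leaves implicit in the phrases ``so $\iota_*(1)=a\lambda_g$'' and ``its class in Chow cannot be zero.''
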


\begin{proof}
	We have a closed immersion $\iota \colon L_{J}/L_{J}\cap P_{I} \rightarrow G/P_{I}$ and $\iota_{*}(1)$ is proportional to $[L_{J}/L_{J}\cap P_{I}] \in \A(G/P_{I})$. To determine this class up to scalar we apply the pullback map to get $\iota^{*}\iota_{*}(1) = c_{g}\mathcal{N} = 0$ by Corollary \ref{cor-chern-normal}. However, the pullback map is surjective with kernel $\lambda_{g}\A(G/P_{I})$ and so $\iota_{*}(1) = a.\lambda_{g}$ for some $a \in \mathbb{Q}$. Given that $L_{J}/L_{J}\cap P_{I}$ is a closed subvariety in the projective variety $G/P_{I}$, its class in Chow cannot be zero, so $a \neq 0$. 
\end{proof}
What can be said about the constant $a$? For example, what is its sign? A precise statement is the following:
\begin{lemma}
	\label{le-constant}
	Let $\rho_{\Hodge} \in \Rep(P_{I})$ be the representation giving the Hodge vector bundle $\Omega = \mathcal{W}(\rho_{\Hodge})$ on $\mathcal{A}_{g}^{\tor}$. Then $[L_{J}/L_{J}\cap P_{I}] = a_{J,g}c_{g}(\mathcal{V}(\rho_{\Hodge}))$ in $\A(G/P_{I})$ where $(-1)^{g}a_{J,g} > 0$.
\end{lemma}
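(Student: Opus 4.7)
First I would compute $c_{g}(\mathcal{V}(\rho_{\Hodge}))$ as an explicit multiple of $\lambda_{g}$ from weights. The parabolic $P_{I}$ stabilises the Lagrangian $W = \operatorname{span}(f_{g+1},\ldots,f_{2g}) \subset \Std$, whose $T$-weights are $-e_{1},\ldots,-e_{g}$; dualising the $P_{I}$-equivariant sequence $0 \to W \to \Std \to \Std/W \to 0$ identifies $\rho_{\Hodge} \cong (\Std/W)^{\vee}$ as the unique rank-$g$ irreducible subrepresentation of $\Std^{\vee}|_{P_{I}}$, with weights $-e_{1},\ldots,-e_{g}$. By the splitting principle, $c_{g}(\mathcal{V}(\rho_{\Hodge})) = \prod_{i=1}^{g}(-e_{i}) = (-1)^{g}\lambda_{g}$ in $\A(G/P_{I})$. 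Combined with Theorem \ref{th-main}, which writes $[L_{J}/L_{J}\cap P_{I}] = a\lambda_{g}$ for some $a \in \mathbb{Q}\setminus\{0\}$, setting $a_{J,g} = (-1)^{g}a$ yields the asserted proportionality, so the required sign condition $(-1)^{g}a_{J,g} > 0$ reduces to showing $a > 0$.

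Next I would pin down the sign of $a$ by realising $L_{J}/L_{J}\cap P_{I}$ as the zero locus of a natural section. The Levi $L_{J} \cong GL_{1} \times GSp_{2g-2}$ stabilises the line $\ell = \operatorname{span}(f_{2g}) \subset W$; a direct check using the shape of $\psi$ shows $V' := \operatorname{span}(f_{2},\ldots,f_{2g-1})$ satisfies $V'^{\perp} = \operatorname{span}(f_{1},f_{2g})$, and the embedding $\iota\colon L_{J}/L_{J}\cap P_{I} \hookrightarrow G/P_{I}$ identifies its source with the Lagrangian Grassmannian of $V'$ and its image with the closed subvariety $\{L \in G/P_{I} : \ell \subset L\}$ of Lagrangians containing $\ell$. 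This subvariety is precisely the vanishing locus of the section $s_{f_{2g}}$ of the equivariant quotient bundle $\mathcal{V}(\Std/W)$ sending $L \mapsto f_{2g} + L \in \Std/L$, since $s_{f_{2g}}(L)=0$ iff $f_{2g}\in L$. Its codimension equals $g = \operatorname{rk}\mathcal{V}(\Std/W)$, so the zero locus is a local complete intersection and its class equals the top Chern class of the bundle; the weights of $\Std/W$ being $e_{1},\ldots,e_{g}$ give $[L_{J}/L_{J}\cap P_{I}] = c_{g}(\mathcal{V}(\Std/W)) = \lambda_{g}$. Hence $a=1 > 0$, so $(-1)^{g}a_{J,g} = 1 > 0$.

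The main obstacle is keeping sign conventions straight: one must be consistent about which side of the tautological filtration the subrepresentation $\rho_{\Hodge}$ sits on, and about the identification of a weight $\alpha$ with the first Chern class of the associated line bundle (fixed by Proposition \ref{cor-top-chern-class}). Should the zero-locus identification prove delicate, the sign of $a$ can be obtained more robustly by a positivity argument: pair both sides of $[L_{J}/L_{J}\cap P_{I}] = a\lambda_{g}$ with the top power $\lambda_{1}^{\binom{g+1}{2}-g}$ of the Pl\"ucker class on $G/P_{I}$. The pairing with the left-hand side is strictly positive because $[L_{J}/L_{J}\cap P_{I}]$ is the class of an irreducible subvariety and $\lambda_{1}$ is very ample, and the pairing with the right-hand side is strictly positive because $\lambda_{g}$ is represented by the effective cycle exhibited above, forcing $a > 0$.
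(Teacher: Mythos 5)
Your proof is correct, but it follows a genuinely different route from the paper's. The paper establishes the sign $(-1)^{g}a_{J,g}>0$ by transporting the question to the Shimura variety side: it applies Hirzebruch--Mumford proportionality to compare intersection numbers on $G/P_{I}$ and $\mathcal{A}_{g}^{\tor}$, determines the sign of the proportionality constant $R$ from the anti-ampleness of $\mathcal{L}=\mathcal{V}(\det\rho_{\Hodge})$ versus the nefness of $\omega$, and then uses van der Geer's result that $c_{g}(\Omega)$ is (up to a positive constant) the class of the effective $p$-rank $0$ cycle $V_{0}$. Your argument instead stays entirely on the flag variety: you identify $L_{J}/L_{J}\cap P_{I}\hookrightarrow G/P_{I}$ with the locus of Lagrangians containing the $L_{J}$-stable isotropic line $\ell$, exhibit this as the (reduced, expected-codimension) zero locus of the section of the tautological quotient bundle $\mathcal{V}(\Std/W)$ induced by $f_{2g}$, and read off $[L_{J}/L_{J}\cap P_{I}]=c_{g}(\mathcal{V}(\Std/W))=\lambda_{g}$ from the weights $e_{1},\ldots,e_{g}$ of $\Std/W$; combined with $c_{g}(\mathcal{V}(\rho_{\Hodge}))=\prod(-e_{i})=(-1)^{g}\lambda_{g}$ this gives the explicit value $a_{J,g}=(-1)^{g}$, which is stronger than the sign statement the paper extracts (and in fact reproves Theorem~\ref{th-main} with $a=1$ as a byproduct, so invoking it is not logically necessary). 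This is a nice trade-off: the paper's argument is more robust (it would survive if $L_{J}/L_{J}\cap P_{I}$ were not a nice degeneracy locus) while yours is sharper and, perhaps more in the spirit of the introduction, entirely internal to $G/P_{I}$. Two small caveats: you implicitly discard the similitude character (the weights of $W$ are really $c-e_{i}$), which is fine since $c$ dies in $\A(G/P_{I})$ but deserves a word; and your fallback positivity argument still rests on the zero-locus realisation to know $\int_{G/P_{I}}\lambda_{g}\lambda_{1}^{d-g}>0$, so it is not truly an independent alternative.
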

%We can also try to understand the classes of smaller sub flag varieties. For $1 \leq k \leq g-1$ let $J_{k} = \Delta \setminus \{e_{1}-e_{2},\ldots e_{k}-e_{k+1}\}$. Denote $Y_{g} = G/P_{I}$ and $Y_{g-k} = L_{J_{k}}/L_{J_{k}}\cap P_{I}$.
%\begin{corollary}
%	$[Y_{g-k}] = a_{k}.\lambda_{g}\lambda_{g-1}\ldots \lambda_{g-k+1} \in \A(Y_{g})$ for some $a_{k} \neq 0$. 
%\end{corollary}

%\begin{proof} NEED TO CHECK THIS
%	Do induction on $g$. Suppose it holds for $h \leq g-1$. Let $1 \leq k \leq g-1$. Then in $\A(Y_{g-1})$ we have $$[Y_{g-k}] = [Y_{g-1-(k-1)}] = b_{k-1}\lambda_{g-1}\ldots \lambda_{g-1-(k-1)+1}$$ Pushing forward to $Y_{g}$ we get $$[Y_{g-k}] = b_{k-1}\iota_{*}\iota^{*}(\lambda_{g-1}\ldots \lambda_{g-k+1}) = b_{k-1}\iota_{*}(1).\lambda_{g}\ldots \lambda_{g-k+1}$$ is proportional to $\lambda_{g}\ldots \lambda_{g-k+1}$ up to non-zero scalar.
%\end{proof}
To prove this we compare with the Shimura variety and use positivity statements on both sides together with Hirzebruch-Mumford proportionality. Let $S$ be a Shimura variety of Hodge type. Write $d \coloneqq \dim(S)$. Applying $\ref{subsec-Chowrings}$ to the standard principal bundle on $S^{\tor}$ gives
\[\begin{tikzcd}
	\Rep(P_{I}) \arrow[r, "\mathcal{W}"] \arrow[d, "\mathcal{V}"]& \Vect(S^{\tor}) \\ \Vect(G/P_{I})
\end{tikzcd}\] 
and also for each $1\leq i \leq d$ \[\begin{tikzcd}
	S^{W_{I}} \arrow[r, "c_{i}(\mathcal{W})"] \arrow[d, "c_{i}(\mathcal{V})"]& A^{i}(S^{\tor}) \\ A^{i}(G/P_{I})
\end{tikzcd}\] 
\subsubsection{Hirzebruch-Mumford Proportionality}
	There is some $R \neq 0$ such that for all $f \in \mathbb{Q}[c_{1},\ldots,c_{d}]$ homogeneous of degree $d$ in the graded ring with grading $\deg(c_{i}) = i$, and all $\rho \in \Rep(P_{I})$ there is equality 
	$$\int_{G/P_{I}}f(c_{1}(\mathcal{V}(\rho)),\ldots, c_{d}(\mathcal{V}(\rho))) = R\int_{S^{\tor}}f(c_{1}(\mathcal{W}(\rho)),\ldots, c_{d}(\mathcal{W}(\rho)))$$
\begin{proof}
	See \cite{Mumford} in characteristic zero and \cite[Corollary 7.21]{Wedhorn-Ziegler} in positive characteristic.
\end{proof}

\subsubsection{Proof of Lemma \ref{le-constant}}
    Under the Borel isomorphism $$\A(G/P_{I}) \cong \mathbb{Q}[\lambda_{1},\ldots, \lambda_{g}]/((1+\lambda_{1}+\ldots+\lambda_{g})(1-\lambda_{1}+\ldots+(-1)^{g}\lambda_{g})-1)$$ $c_{g}(\mathcal{V}(\rho_{\Hodge}))$ is proportional to $\lambda_{g}$ so Theorem \ref{th-main-intro} gives that $[L_{J}/L_{J}\cap P_{I}] = a_{J,g}c_{g}(\mathcal{V}(\rho_{\Hodge}))$ for some $a_{J,g} \neq 0$. It remains to show that $(-1)^{g}a_{J,g} > 0$.

    \vspace{3mm}
 
	Firstly, in the Siegel case $S^{\tor} = \mathcal{A}_{g}^{\tor}$ the Hirzebruch-Mumford proportionality constant satisfies: $(-1)^{d}R > 0$. The line bundle $\mathcal{L}\coloneqq \mathcal{V}(\det(\rho_{\Hodge}))$ on $G/P_{I}$ is anti-ample by \cite[Lemma 3.2.5]{GoldringKoskivirta}, \cite[II (4.4)]{Jantzen} and the Hodge line bundle $\omega \coloneqq \det(\Omega) = \mathcal{W}(\det(\rho_{\Hodge}))$ on $S^{\tor}$ is nef.
    \begin{equation*}
        \begin{aligned}
            \int_{G/P_{I}}c_{1}(\mathcal{L}^{\vee})^{d} &= (-1)^{d}\int_{G/P_{I}}c_{1}(\mathcal{L})^{d}
            \\&= (-1)^{d}R\int_{S^{\tor}}c_{1}(\omega)^{d}
        \end{aligned}
    \end{equation*}
     Thus, $(-1)^{d}R > 0$.

    \vspace{3mm}
 
    Secondly, applying Hirzebruch-Mumford proportionality with $\rho_{\Hodge}$ and homogeneous polynomial $f = c_{g}c_{1}^{d-g}$ gives:
	\begin{equation*}
		\begin{aligned}
			\int_{G/P_{I}}[L_{J}/L_{J}\cap P_{I}]c_{1}(\mathcal{L}^{\vee})^{d-g} &= (-1)^{d-g}a_{J,g}\int_{G/P_{I}}c_{g}(\mathcal{V}(\rho_{\Hodge}))c_{1}(\mathcal{L})^{d-g}
            \\&=
			(-1)^{d-g}a_{J,g}R \int_{S^{\tor}}c_{g}(\Omega)c_{1}(\Omega)^{d-g}
			\\&=
			(-1)^{d-g}a_{J,g}R \int_{S^{\tor}}\frac{1}{N_{g,p}}[V_{0}]c_{1}(\omega)^{d-g}	
		\end{aligned}
	\end{equation*}
	where $N_{g,p}$ is a positive constant depending on $g$ and $p$. The last line follows from \cite[Prop 9.2]{vanderGeer} which says that the cycle class of the $p$-rank $0$ locus $V_{0} \subset \mathcal{A}_{g}^{\tor}$ is $[V_{0}] = N_{g,p}c_{g}(\Omega)$. Hence, $ \int_{S^{\tor}}\frac{1}{N_{g,p}}[V_{0}].c_{1}(\mathcal{W}(\rho))^{d-g} \geq 0$. The line bundle $\mathcal{L}$ on $G/P_{I}$ is anti-ample so $\int_{G/P}[L_{J}/L_{J}\cap P_{I}].c_{1}(\mathcal{L}^{\vee})^{d-g} > 0$. This implies that $$(-1)^{d-g}a_{J,g}R \geq 0$$ Thus, $(-1)^{g}a_{J,g} > 0$. \qed

%\section{Pullback map in general}
%\subsection{Bases}

%Due to the Bruhat decomposition which is an affine stratification the cycle class map $\A(G/P_{I}) \longrightarrow \coH(G/P_{I},\mathbb{Q})$ is an isomorphism. In this section we recall some bases of this graded vector space. The presentation of Borel provides one set of generators for the Chow ring in terms of chern classes of homogenous vector bundles. There is another important set of bases given in terms of the Bruhat decomposition.
%\begin{enumerate}
%	\item $\{[X_{w}]\in A^{d-l(w)}(G/P_{I}) \mid w\in W_{I}\}$
%	\item $\{[X_{w}]\in H_{2l(w)}(G/P_{I}) \mid w\in W_{I}\}$
%	\item $\{[X_{w}]^{PD}\in H^{2d - 2l(w)}(G/P_{I}) \mid w\in W_{I}\}$
%	\item $\{[X_{w}]^{KD}\in H^{2l(w)}(G/P_{I}) \mid w\in W_{I}\}$
%\end{enumerate}
%Note that $[X_{w}]^{KD} = [X_{w_{0}w}]^{PD}$ so this is just a relabelling. The cycle class map sends $[X_{w}]$ to $[X_{w}]^{PD}$. 

%\vspace{3mm}

%The problem of presenting the Schubert classes i.e. basis elements above in terms of Borel's presentation is tackled in... We present a new method which works in very restricted situations.

%Let $G$ be a connected reductive group with root datum $(\phi,\phi^{\vee}...)$ and a choice of positive roots $\phi^{+}$ (equivalently choice of Borel subgroup $B$). Denote $\Delta$ as the simple roots. 

%$[X_{w}] \mapsto [X_{w}]^{PD} = [X_{v}]^{KD} \mapsto [X_{v}]^{KD} = [X_{w_{0,I}v}]^{PD} \mapsto [X_{w_{0,I}v}]$ where $w_{0}v = w$ in ${}^{I}W$.
\section{Other Shimura/flag varieties}

There are other Shimura varieties where we obtain fairly trivial confirmations of Conjecture \ref{conj-bdy}. In the case of the Hilbert modular variety at an unramified prime the parabolic subgroup is the Borel subgroup and so the sub flag variety has dimension $0$. This forces it to be proportional to the only class in degree $0$ in $\A(G/B)\cong \T(S^{\tor})$. In \cite{cooper2021tautological} it is shown that $\T(S) \cong \T(S^{\tor})/(z)$ where $z$ is a generator of the $1$-dimensional space $T_{0}(S^{\tor})$. So in this case, \ref{conj-bdy} is (trivially) confirmed. 

	\bibliographystyle{plain}
	\bibliography{paper2refs}
\end{document}